\newcommand{\R}{\mathbb R}
\newtheorem{lemma}{Lemma}
\newtheorem{example}{Example}
\newenvironment{potwr}[1]{{\noindent\it \underline{Proof of Theorem \ref{#1}}}:}{\qed}
\newtheorem{definition}[lemma]{Definition}
\renewcommand{\Im}{\operatorname{Im}}
\newcommand{\dist}{\operatorname{dist}}
\begin{document}
\title{A Morse deformation lemma at infinity}
\author{J. Haddad}
\maketitle
\abstract{We prove a generalized version of the classic deformation lemma from Morse Theory that considers functions going to $-\infty$ at a compact set, and allowing the lower value of the deformation to be $-\infty$. The result is valid for a class of functions satisfying a suitable growth condition.}

\section{Introduction and main result}
Given a smooth manifold $M$ and a smooth function $f:M \to \R$, Morse Theory provides a relation between the topology of the sets $f^{-1}((-\infty, a])$ and the critical points of $f$, this is, points $x \in M$ such that $d_x f = 0$.
We recall that the singular values of $f$ are the images by $f$ of its critical points and the regular values are those that are not singular.

Morse Theory was developed around 1926 by Marston Morse and has had a great number of generalizations and applications in varied contexts, including for example Morse Theory for functions defined on Hilbert spaces \cite{P}, Morse Homology \cite{B}, Discrete Morse Theory \cite{F}, etc.
In almost all of its ``flavors'', classic Morse Theory relies on a few number of fundamental lemmas, one of wich is the following:

\begin{lemma}
\emph{(Morse Deformation Lemma)}

\label{deformation_lemma_0}
Let $M$ be a compact smooth manifold and $g: M \to \R$ a $C^2$ function such that all values in a closed interval $[a,b]$ are regular.
Then $g^{-1}((-\infty, a] )$ is a strong deformation retract of $g^{-1}((-\infty,b])$.
\end{lemma}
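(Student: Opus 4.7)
The plan is to construct a smooth vector field $X$ on $M$ whose flow decreases $g$ at a controlled unit rate on the strip $g^{-1}([a,b])$, and then to use its time-$t$ flow to define an explicit deformation retract.

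First, I would fix an auxiliary Riemannian metric on $M$, giving access to the gradient $\nabla g$. Since $[a,b]$ contains only regular values, the continuous function $|\nabla g|^2$ is strictly positive on the compact set $g^{-1}([a,b])$, hence bounded below by some $\delta>0$. Choose an open neighborhood $U$ of $g^{-1}([a,b])$ on which $|\nabla g|^2>\delta/2$, and a smooth cutoff $\eta:M\to[0,1]$ equal to $1$ on $g^{-1}([a,b])$ and supported in $U$. Set
\[
X = -\eta\,\frac{\nabla g}{|\nabla g|^2},
\]
extended by zero where $\eta=0$. Then $X$ is smooth and bounded, and along any integral curve $\varphi_t(x)$ that remains in $g^{-1}([a,b])$ one has $\tfrac{d}{dt}g(\varphi_t(x))=X(g)(\varphi_t(x))=-1$. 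Compactness of $M$ ensures that the flow $\varphi_t$ is complete.

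Next I would define $H:g^{-1}((-\infty,b])\times[0,1]\to g^{-1}((-\infty,b])$ by
\[
H(x,t)=\varphi_{t\cdot\max(g(x)-a,\,0)}(x).
\]
For $g(x)\le a$ this reduces to $H(x,t)=x$, giving the required pointwise fixedness of the subspace $g^{-1}((-\infty,a])$. For $g(x)\in(a,b]$, the trajectory $s\mapsto\varphi_s(x)$ decreases $g$ at unit rate while it remains in $g^{-1}([a,b])$, so after time $g(x)-a$ it lands on $g^{-1}(\{a\})$ without leaving the strip; in particular $H(x,1)\in g^{-1}((-\infty,a])$ and $H(x,t)\in g^{-1}((-\infty,b])$ for all $t$.

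The main technical point to verify is continuity of $H$ across the locus $\{g=a\}$: this follows because $\max(g(x)-a,0)$ is continuous in $x$ and vanishes there, while $\varphi_0$ is the identity, so both expressions for $H$ agree and vary continuously. Once this is in place, $H$ is the desired strong deformation retraction. The key subtlety of the proof lies in the cutoff construction guaranteeing completeness of the flow and a uniform rate of descent; this is precisely the step that will require modification when extending the lemma to non-compact $M$ with values blowing up to $-\infty$, as in the sequel.
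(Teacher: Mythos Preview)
The paper does not actually supply its own proof of this lemma; it is quoted as the classical background result. Your argument is correct and is exactly the standard one, and it coincides with the technique the paper explicitly invokes later in the proof of Lemma~\ref{deformation_lemma_2} (``As in the classic proof of the Morse deformation lemma take the vector field $X_p = -\eta(p)\nabla f(p)/\|\nabla f(p)\|^2$\ldots''), including the cutoff $\eta$, the unit-rate descent $\tfrac{d}{dt}g(\varphi_t(x))=-1$, and the reparametrized homotopy.
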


The compacity assumption on $M$ can be replaced by the compacity of the set $g^{-1}((-\infty,b])$.
Sourprisingly, it is not found anywhere in the literature any generalization of Lemma \ref{deformation_lemma_0} allowing the lower value $a$ to be $-\infty$.
More precisely, let $f: M \setminus \Sigma \to \R$ be a $C^2$ function defined outside a compact set $\Sigma$, for which $f(x_n) \to -\infty$ whenever $(x_n)$ is a sequence in $M \setminus \Sigma$ converging to $z \in \Sigma$, and such that all values in $(-\infty, b]$ are regular.
We ask if there can be established a topological equivalence between $\Sigma$ and a set of the form $f^{-1}((-\infty,b]) \cup \Sigma$.
This problem is easy to reduce to a generalization of Lemma \ref{deformation_lemma_0} where the value $a$ is allowed to be singular. This reduction is done by composing $g$ with a function $\Phi:\R \to \R$ satisfying $\lim_{x \to -\infty} \Phi(x) = a$.

Some approaches in this directions have been made.
A degenerate version of Lemma \ref{deformation_lemma_0} is proved in \cite{KCC} where $a$ (and not $b$) is allowed to be a singular value, under the requirement that the level set $f^{-1}(\{a\})$ is totally disconnected.
The proof can be traced back to \cite{COR} where a Morse Theory for continuous functions is established. In such a theory, the critical points must be isolated.
Morse-Bott Theory deals with functions having manifolds consisting entirely of critical points.
These so called ``critical manifolds'' must be non-degenerate in the sense that at each point of the critical manifold, the Hessian restricted to the normal tangent space is non-degenerate.
A critical point is regarded as a $0$-dimentional critical manifold, generalizing the concept of non-degenerate critical point in a very elegant and natural way.

In contrast, our deformation lemma does not require the $\Sigma$ ``level set'' to be even a manifold.
The non-degeneracy is replaced by a condition in the growth of the function when it tends to $-\infty$.
While this condition restricts the class of functions we may consider, our deformation lemma is still more general than the mentioned ones, for our purpose.

With this in mind, we give the following definition.
\begin{definition}
\label{def_fast_decreasing}
Let $M$ be a smooth riemannian manifold and $g: M \setminus \Sigma \to \R$ be a $C^1$ function defined outside a compact set $\Sigma$.
We say that $g$ is a ``fast decreasing'' function at $\Sigma$ under level $b \in \R$ if
\[\lim_{x \to z} g(x) = -\infty\]
for every $z \in \Sigma$, and 
\[g(x) \leq b \quad \Rightarrow \quad \|\nabla g(x)\| > 1/ \phi(g(x))\]
where $\phi:(-\infty, b] \to \R$ is a strictly positive continuous function with bounded primitive.

\end{definition}
Observe that all values in the interval $(-\infty, b]$ of a fast decreasing function under level $b \in \R$, are regular.

Our main result is the following:
\begin{lemma}
\label{deformation_lemma_1}
Let $M$ be a smooth riemannian manifold, $\Sigma \subset M$ a compact set and $g: M \setminus \Sigma \to \R$ a $C^2$ fast decreasing function at $\Sigma$ under level $b \in \R$.
Assume also that $g^{-1}((-\infty,b]) \cup \Sigma$ is compact.
Then $\Sigma$ is a strong deformation retract of $g^{-1}((-\infty,b]) \cup \Sigma$.
\end{lemma}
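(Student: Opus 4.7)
The idea is to push everything in $g^{-1}((-\infty,b])$ down to $\Sigma$ by flowing along a reparametrization of the gradient of $g$ that makes $g$ decrease at unit rate. The fast-decreasing hypothesis will guarantee that each orbit has \emph{finite arc length}, and hence converges to a point of $\Sigma$ as forward time tends to infinity. The deformation is then obtained by pre-composing the flow with a homeomorphism $[0,1)\to[0,\infty)$ and inserting the limit point at $t=1$.

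\textbf{The flow.} On the set $U = g^{-1}((-\infty,b])\setminus\Sigma$ the fast-decreasing hypothesis gives $\nabla g\neq 0$, so I would work with the vector field $X=-\nabla g/\|\nabla g\|^2$; along its forward flow $\Phi_s(x)$ one checks $\tfrac{d}{ds}g(\Phi_s(x))=-1$, i.e.\ $g(\Phi_s(x))=g(x)-s$. In particular $\Phi_s(x)$ stays in $\{g\leq b\}$ and cannot reach $\Sigma$ in finite time (since $g$ would be forced to $-\infty$ there, yet equals $g(x)-s$). The fast-decreasing inequality also translates to $\|X\|<\phi(g)$, so
\[
\int_0^\infty \|X(\Phi_s(x))\|\,ds \;<\; \int_0^\infty \phi(g(x)-s)\,ds \;=\; \int_{-\infty}^{g(x)} \phi(u)\,du,
\]
which is finite by the bounded-primitive assumption on $\phi$. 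Combined with the compactness of $K:=g^{-1}((-\infty,b])\cup\Sigma$, this rules out finite-time escape, so $\Phi_s(x)$ is defined for all $s\geq 0$. The finite-length property makes $(\Phi_s(x))_{s\geq 0}$ Cauchy and hence convergent to some $p(x)\in K$; since $g(\Phi_s(x))\to-\infty$, necessarily $p(x)\in\Sigma$.

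\textbf{The deformation.} Fix a homeomorphism $\tau:[0,1)\to[0,\infty)$ (say $\tau(t)=t/(1-t)$) and put
\[
H(t,x)=\begin{cases}\Phi_{\tau(t)}(x) & x\notin\Sigma,\ t<1,\\ p(x) & x\notin\Sigma,\ t=1,\\ x & x\in\Sigma.\end{cases}
\]
The conditions $H(0,\cdot)=\mathrm{id}$, $H(1,K)\subset\Sigma$, and $H(\cdot,z)=z$ for $z\in\Sigma$ are immediate. The real work is continuity of $H$, and the critical case is at points $(t,z)$ with $z\in\Sigma$. For any sequence $(t_n,x_n)\to(t,z)$ with $x_n\in K\setminus\Sigma$, the fast-decreasing hypothesis forces $g(x_n)\to-\infty$, so the uniform arc-length bound
\[
\dist\bigl(H(t_n,x_n),\,x_n\bigr) \;\leq\; \int_{-\infty}^{g(x_n)}\phi(u)\,du \;\longrightarrow\; 0
\]
(independent of $t_n$) gives $H(t_n,x_n)\to z$. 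On $\{t<1\}\times(K\setminus\Sigma)$ continuity is ODE smooth dependence on initial data, and continuity at $(1,x)$ with $x\notin\Sigma$ reduces to continuity of the endpoint map $x\mapsto p(x)$, obtained by comparing finite-length orbits from nearby points via the same length estimate. I expect this uniform-length control near $\Sigma$ to be the main technical point of the argument.
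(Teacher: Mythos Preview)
Your argument is correct and rests on the same mechanism as the paper's: the normalized gradient flow of $g$ together with the arc-length bound $\int_{-\infty}^{g(x)}\phi$, which is exactly the bounded-primitive hypothesis. The paper packages this a bit differently. It first \emph{composes} $g$ with a bounded primitive $\Phi$ of $\phi$ normalized so that $\Phi(-\infty)=0$, obtaining $f=\Phi\circ g$: a continuous nonnegative function on all of $M$ with $\Sigma=f^{-1}(0)$ and $\|\nabla f\|\geq 1$ on $\{g\leq b\}$. This reduces the problem to a clean auxiliary lemma (uniform lower bound on the gradient, $f$ bounded), whose flow $-\nabla f/\|\nabla f\|^2$ reaches $\Sigma$ in \emph{finite} time $f(x)$; the deformation is then $\psi(x,t)=\varphi(x,\,t\,f(x))$ and continuity follows from the single Lipschitz estimate $\dist(\psi(x,t_1),\psi(x,t_2))\leq f(x)\,|t_1-t_2|$. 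Your $x$-independent reparametrization $\tau(t)=t/(1-t)$ forces you instead to control tails of the arc-length integral, and at $(1,x)$ with $x\notin\Sigma$ you need not only continuity of $p$ but also that $\Phi_{\tau(t_n)}(x_n)\to p(x_n)$ uniformly---which does follow from the tail bound $\int_{-\infty}^{g(x_n)-\tau(t_n)}\phi$, but is a step your sketch leaves implicit. In substance the two proofs coincide: your arc-length bound is literally $\Phi(g(x))$, and the paper's composition with $\Phi$ simply trades the fast-decreasing inequality for a gradient lower bound up front.
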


Before giving the proof we review some examples:
\begin{example}
\normalfont
Let $M = \R^2$ and 
\[f(x,y) = - \left( \left(x^2+y^2\right)^3-4 x^2 y^2 \right)^{-2}\]
The singular set for $f$ pictured in Figure \ref{quadrifolium} is the four-leaved clover (see \cite{G} pages 92, 93).
We verify the fast decreasing condition as follows:

Let $p(x,y) = \left(x^2+y^2\right)^3-4 x^2 y^2$. We compute for $(x,y)$ in a small neighbourhood of $0$,
\begin{align*}
p(x,y)^2 		&\leq C \|(x,y)\|^{12} + 16 x^4 y^4\\
\|\nabla p(x,y)\|^2	&= 36 (x^2+y^2)^5 + x^2 y^2 (x^2+y^2)(64-192(x^2+y^2)) \\
			&\geq C \|(x,y)\|^{10} + C x^2 y^2 \|(x,y)\|^2\\
\|\nabla p(x,y)\|^{2(1+\frac 15)} 	&\geq C \|(x,y)\|^{10+10\frac 15} + C x^{2+2\frac 15} y^{2+2\frac 15} \|(x,y)\|^{2+2\frac 15}\\
					&\geq C \|(x,y)\|^{12} + C x^{3+3\frac 15} y^{3+3\frac 15}
\end{align*}
Where $C>0$. Then we have
\begin{align}
\label{fp_ineq}
\|\nabla p(x,y)\|^{1+\frac 15} 	&\geq C |p(x,y)|.
\end{align}

Since no point in $\Sigma \setminus \{0\}$ is singular, there exists $K$ a compact neighbourhood of $\Sigma$ such that inequality \eqref{fp_ineq} is satisfied for some $C>0$ and all $(x,y) \in K \setminus \Sigma$.
Finally, inequality \eqref{fp_ineq} implies
\[\|\nabla f(x,y)\| \geq \frac 1 {|f(x,y)|^{-\frac {13}{12}}}\]
which is the fast decreasing condition, valid in a neighbourhood of $\Sigma$, this is, for $f(x,y)$ sufficiently negative.
\begin{figure}[h]
\begin{center}
\label{quadrifolium}
\includegraphics[scale=0.5]{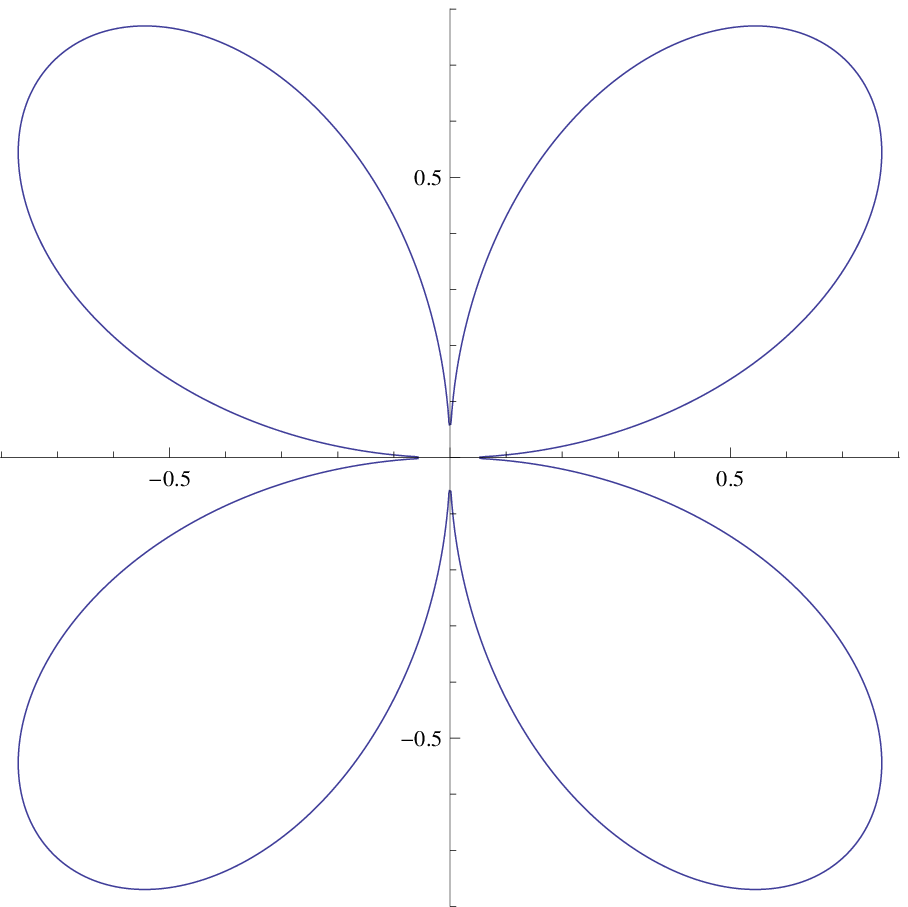}
\caption{The singular set of $f$.}
\end{center}
\end{figure}
Lemma \ref{deformation_lemma_1} shows that the sets $f^{-1}((-\infty, b]) \cup \Sigma$ for $b\ll 0$ have the homotopy type of $S^1 \wedge S^1\wedge S^1 \wedge S^1$.
Although the example is artifitial, this fact cannot be proved directly using the classic version of the Deformation Lemma.
\end{example}

\begin{example}
\normalfont
In \cite{AH1} and \cite{AH2} the authors need to study the level sets of the function $G:\R^n \to \R$ given by
\[G(x) = - \int_{S^1} \frac {dt} {\|x - \Gamma(t) \|}\]
where $\Gamma:S^1 \to \R^n$ is a $C^1$ embedding.
Specifically, it is shown that $G^{-1}((-\infty, b] ) \cup \Sigma$ is a tubular neighbourhood of $\Sigma = \Im(\Gamma)$ for $b \ll 0$.
It is not hard to see that $G$ is a fast decreasing function, thus Lemma \ref{deformation_lemma_1} shows that $G^{-1}((-\infty, b] ) \cup \Sigma$ has the homotopy type of $S^1$.
\end{example}

\section{Proof of the lemma}
Lemma \ref{deformation_lemma_1} is a consequence of the following
\begin{lemma}
\label{deformation_lemma_2}
Let $M$ be a smooth riemannian manifold and  $f:M \to [0, \infty)$ be a continuous function which is $C^2$ in $M \setminus \Sigma$ where $\Sigma = f^{-1}(0)$, and such that all values in an interval $(0,a]$ are regular. 

Assume $\|\nabla f(x)\| > \alpha $ for some constant $\alpha > 0$ for all $x \in M \setminus \Sigma$, and assume that $f^{-1}([0,a])$ is compact.
Then $\Sigma$ is a strong deformation retract of $f^{-1}([0,a])$ and $f^{-1}([a, \infty))$ is a strong deformation retract of $M \setminus \Sigma$.
\end{lemma}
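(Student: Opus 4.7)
The plan is to build both strong deformation retracts from the normalized gradient flow of $f$. On $M \setminus \Sigma$ consider the $C^1$ vector field $X = -\nabla f / \|\nabla f\|^2$; along its integral curves $\phi_t(x)$ one has $f(\phi_t(x)) = f(x) - t$ and the uniform speed bound $\|\dot{\phi}_t(x)\| = 1/\|\nabla f(\phi_t(x))\| < 1/\alpha$. Because $\phi_t(x)$ remains in the compact set $f^{-1}([0,a])$ as long as $t \in [0, f(x))$, the trajectory exists on this whole interval, and the Lipschitz bound forces $t \mapsto \phi_t(x)$ to extend continuously to $t = f(x)$ with the limit $r(x)$ necessarily lying in $\Sigma$. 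Set $r(z) = z$ for $z \in \Sigma$, and propose the candidate homotopy
\[
H(x, s) = \phi_{s f(x)}(x) \quad (x \notin \Sigma), \qquad H(z, s) = z \quad (z \in \Sigma).
\]

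The identities $H(\cdot, 0) = \mathrm{id}$, $H(\cdot, 1) = r$, and $H(z, \cdot) \equiv z$ are immediate from the construction, so the whole problem reduces to proving continuity of $H$; on $(M \setminus \Sigma) \times [0, 1)$ this is classical ODE theory. The first boundary case, continuity at $(z, s) \in \Sigma \times [0,1]$, follows directly from the speed bound: $\dist(H(x,s), x) \leq s f(x)/\alpha$, which tends to $0$ as $x \to z$ by continuity of $f$ and $f(z) = 0$. The remaining case, continuity at $(x_0, 1)$ with $x_0 \notin \Sigma$, is the \textbf{main obstacle}, because the flow is undefined at the landing point $r(x_0) \in \Sigma$ so continuous dependence cannot be applied directly up to the terminal time.

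The strategy to overcome this will be: given $\epsilon > 0$, apply standard continuous dependence on initial conditions at the intermediate time $T = f(x_0) - \epsilon$ to obtain $\phi_T(x_n) \to \phi_T(x_0)$ for any sequence $x_n \to x_0$; then use the speed bound to control the remaining length-$\epsilon$ tails, obtaining $\dist(H(x_n, s_n), \phi_T(x_n)) \leq (s_n f(x_n) - T)/\alpha \to \epsilon/\alpha$ and $\dist(\phi_T(x_0), r(x_0)) \leq \epsilon/\alpha$. The triangle inequality gives $\limsup_n \dist(H(x_n, s_n), r(x_0)) \leq 2\epsilon/\alpha$, and since $\epsilon$ was arbitrary this shows $H(x_n, s_n) \to r(x_0)$, which closes the continuity check and incidentally yields continuity of $r$.

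The second assertion is handled by the analogous construction with the positive flow $\phi^+_t$ of $+\nabla f / \|\nabla f\|^2$: set
\[
H'(x, s) = \phi^+_{s \max(0,\, a - f(x))}(x),
\]
which fixes $f^{-1}([a, \infty))$ pointwise and pushes every $x \in M \setminus \Sigma$ with $f(x) < a$ up to the level set $f^{-1}(a)$. All such trajectories remain in $f^{-1}([f(x), a]) \subset f^{-1}([0, a])$, never approach $\Sigma$, and continuity across $\{f = a\}$ is automatic because the time parameter vanishes continuously there. In particular the subtle $s = 1$ issue encountered in the first part does not arise here.
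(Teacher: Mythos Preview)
Your proof is correct and follows essentially the same route as the paper's: both use the normalized negative gradient flow, reparametrize to a homotopy $H(x,s)=\phi_{s f(x)}(x)$, obtain the Lipschitz-in-time estimate from the bound $\|\nabla f\|>\alpha$, extend to $\Sigma$ via this estimate, and handle continuity at $s=1$ by an intermediate-time triangle-inequality argument. Minor cosmetic differences are that the paper introduces a compactly supported cutoff $\eta$ to globalize the flow while you argue directly from compactness of $f^{-1}([0,a])$, and you explicitly sketch the (easier) second retraction onto $f^{-1}([a,\infty))$, which the paper's proof leaves implicit.
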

\begin{proof}
As in the classic proof of the Morse deformation lemma take the vector field 
\[X_p = - \eta(p) \frac {\nabla f(p)}{\|\nabla f(p)\|^2}\]
where $\eta:M \to \R$ is a cutoff function with compact support and such that $\eta(x) = 1$ for every $x \in f^{-1}([0,a])$.
The vector field is defined and $C^1$ at $M \setminus \Sigma$.
The corresponding flow $\varphi(x,t)$ is defined and $C^1$ in an open subset of $M \times \R$.
Also, since $X$ has compact support and since $f(\varphi(x,t)) = f(x) - t$, we have that $\varphi$ is well defined in the open set
\[\left\{(x,t) \in M \times \R /\; x \in M \setminus \Sigma, \hbox{ and } t \in [0,f(x))\right\}.\]
Consider $\psi(x,t) = \varphi(x, t.f(x))$ which is continuous at $(M \setminus \Sigma) \times [0,1)$.

For $x \in M \setminus \Sigma$ and $t_1, t_2 \in [0,1)$ the points $\varphi(x,t_2), \varphi(x,t_1)$ are joined by the integral curve $\gamma(t) = \varphi(x,t)$ so
\[\dist(\varphi(x,t_2), \varphi(x, t_1)) \leq \int_{t_1}^{t_2} \|\gamma'(t)\| dt.\]
where $\dist$ is the geodesic distance on $M$ and $\gamma'(t) = X_{\gamma(t)}$. Then by the hypothesis on the gradient of $f$ we have 
\begin{align}
\nonumber
\dist(\varphi(x,t_2), \varphi(x, t_1)) &\leq \alpha^{-1} |t_2 - t_1|\\
\label{psi_lip}
\dist(\psi(x,t_2), \psi(x, t_1)) &\leq \alpha^{-1} f(x) |t_2 - t_1|.
\end{align}

We shall prove first that $\psi$ extends continuously to $M \times [0,1)$:
Let $(x_n,t_n)$ be a sequence in $(M \setminus \Sigma) \times [0,1)$ converging to $(z, t) \in \Sigma \times [0,1)$.
We have $f(x_n) \to 0$ and 
\begin{align*}
\dist(\psi(x_n,t_n), x_n) &= \dist(\psi(x_n,t_n), \psi(x_n, 0))\\
&= \dist(\varphi(x_n,t_n f(x_n)), \varphi(x_n, 0))\\
&\leq \alpha^{-1} |t_n f(x_n)|\\
&\leq \alpha^{-1} f(x_n) \to 0
\end{align*}
 so $\psi(x_n,t_n) \to z$.
Thus we see that $\psi$ can be extended continuously if we set $\psi(z,t) = z$ for $z \in \Sigma$ and $t \in [0,1)$.

Now we must see that $\psi$ extends continuously to $M \times [0,1]$.
Take $x_0 \in M$ and consider the function $\psi_{x_0}$ defined by $ \psi_{x_0}(t) = \psi(x_0,t)$.
From \eqref{psi_lip} we see that $\psi_{x_0}$ can be extended continuously to $t \in [0,1]$ and this allow us to extend $\psi$ to $M \times [0,1]$ as
\[\psi(x_0, 1) = \lim_{t \to 1^-} \psi(x_0,t). \]
Lets see that this extension is continuous:

Clearly, with the new definition of $\psi$, inequality \eqref{psi_lip} remains true for any $x \in M, t_1, t_2 \in [0,1]$.
Assume by contradiction that there is a sequence $(x_n, t_n)$ in $M \times [0,1]$ converging to $(x_0, 1)$, such that 
\[\dist(\psi(x_n, t_n), \psi(x_0, 1)) \geq \epsilon > 0.\]

Take $s = 1- \frac \varepsilon {3 \alpha^{-1} (f(x_0)+1)}$. We have
\[\dist( \psi(x_0, 1), \psi(x_0,s))\leq f(x_0). \alpha^{-1}.(1-s) < \varepsilon /3\]
and for $n$ large, $t_n > s$ and
\begin{align*}
\dist(\psi(x_n, t_n), \psi(x_n,s)) &\leq (f(x_0) + 1). \alpha^{-1}.(t_n - s)\\
&\leq (f(x_0) + 1). \alpha^{-1}.(1 - s) < \varepsilon /3\\
\dist(\psi(x_0, s), \psi(x_n,s)) & < \varepsilon/3
\end{align*}
then we have a contradiction.
\end{proof}
\begin{potwr}{deformation_lemma_1}
Let $\Phi:(-\infty, b] \to \R$ be a bounded primitive of $\phi$. Since $\Phi$ is monotone increasing and bounded, we may assume that 
\[\lim_{x\to -\infty} \Phi(x) = 0.\]
Consider the function $f(x) = \Phi(g(x))$. 
It is positive and can be extended to $\Sigma$ as a continuous (not necessarily smooth) function.
The chain rule gives
\[\nabla f(x) = \phi(g(x)) . \nabla g(x)\]
\[\|\nabla f(x)\| \geq 1\]
for all $x \in M \setminus \Sigma$ such that $g(x) \in (-\infty, b]$.
This proves that all values in the interval $(0, \Phi(b)]$ are regular and that the function $f$ is in the conditions of Lemma \ref{deformation_lemma_2}.
\end{potwr}

\end{document}